\theoremstyle{plain}
\newtheorem{theorem}{Theorem}
\newtheorem{proposition}[theorem]{Proposition}
\newtheorem{definition}[theorem]{Definition}
\theoremstyle{remark}
\theoremstyle{remark}
\theoremstyle{definition}
\newtheorem{remark}[theorem]{Remark}
\DeclareMathOperator{\area}{Area}
\DeclareMathOperator{\sys}{sys}
\newcommand{\abs}[1]{\left\vert#1\right\vert}
\newcommand{\set}[1]{\left\{#1\right\}}
\newcommand{\crochet}[1]{\left[#1\right]}
\newcommand{\fonction}[5]{\begin{array}{cccl}           
#1: & #2 & \longrightarrow & #3 \\
    & #4 & \longmapsto & #5 \end{array}}
\title[Finsler systolic ratio on the 2-sphere]{A Finsler counterexample to the Croke conjecture for the systolic ratio on the 2-sphere}
\author{Guillaume Buro and Louis Merlin}
\address{G. Buro, École Polytechnique Fédérale de Lausanne, Suisse.}
\email{guillaume.buro@epfl.ch}
\address{L. Merlin, Rheinisch-Westfälische Technische Hochschule (RWTH), Aachen, Germany}
\email{louis.merlin@hotmail.fr}
\thanks{The first author acknowledges support by SNSF grant \textit{Metric geometry and Finsler structures of low regularity} number 200021L-175985, the second by RWTH University. Both authors would like to thank Prof. Umberto Hryniewicz and Prof. Marc Troyanov for the interest that they showed on this project and for interesting discussions.}
\subjclass[2010]{37D50, 53B40, 53C22}
\begin{document}

\begin{abstract}
We exhibit a Finsler metric on the $2$-sphere whose systolic (Holmes-Thompson) ratio is $\frac{4\pi}{3}$. This is bigger than the conjectured maximal Riemannian systolic ratio of $2\sqrt{3}$ achieved by the Calabi-Croke metric. The construction of the Finsler metric is heavily inspired by \cite{cossarinisabourau}.
\end{abstract}

\maketitle

\section{Introduction.}

As a general rule of thumb, Finsler metrics produce sharper geometric inequalities than the special case of Riemannian metrics. Evidences for that have been given in different situations: for the minimal entropy problem, compare e.g \cite{bcg2} (Riemannian) and \cite{verovicminimalentropy} (Finslerian), Finsler rational counterexamples to the minimal filling conjecture \cite{buragoivanovfilling} or (more closely related to the present text), the disk of the least area for a given radius is Finsler, compare \cite{cclw} (Riemannian) and \cite{cossarinisabourau} (Finslerian).

In this paper, we confirm this phenomenon in the context of the systolic geometry of the 2-dimensional sphere.

We first consider a Riemannian 2-sphere $(\mathbb{S}^2,g)$. Its \textit{systole}, denoted $\sys (g)$, is the length of the shortest non constant periodic geodesic \footnote{This definition is adapted to the case of the sphere; when the manifold has topology it is classical to consider the least length of \textit{non contractible} geodesics}. As a length, the systole is not scale invariant and it is customary to normalize the systole by the area. Indeed, the ratio
\[\frac{\sys^2(g)}{\area(g)},\]
called the \textit{systolic ratio} is scale invariant and, by a result of \cite{crokesystolbounded}, is bounded on the set of Riemannian metrics on $\mathbb{S}^2$, thus opening the challenging problem of finding the best systolic ratio $\sys (\mathbb{S}^2)$ on the $2$-sphere:
\[\sys(\mathbb{S}^2)=\sup_g \frac{\sys^2(g)}{\area(g)}.\]
It is conjectured in \cite{crokesystolbounded} that the extremal metric is the Calabi-Croke doubled triangle: consider two flat equilateral triangles whose side lengths are 1, glued along the edges. The resulting surface is homeomorphic to a sphere, its area is $\frac{\sqrt{3}}{2}$ (twice the area of a triangle) and the systole is achieved by the curves running through the heights of the triangles, their lengths are $\sqrt{3}$, so that the systolic ratio is $2\sqrt{3}$. The Calabi-Croke sphere is at least a local maximum for the systolic ratio \cite{balacheffcalabicroke}, even for a local variation among Finsler metrics \cite{sabouraucalabicroke}.

The main result of this paper is that we can go past this value of $2\sqrt{3}$ by considering Finsler metrics and we prove the following

\begin{theorem}\label{thm:main}
There exists a smooth Finsler metric on $\mathbb{S}^2$ with six singularities whose systolic ratio is arbitrarily close to $\frac{4\pi}{3}$.\footnote{Indeed $3.464\cdots=2\sqrt{3} < \frac{4\pi}{3}= 4.188\cdots$.}
\end{theorem}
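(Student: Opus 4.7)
The plan is to adapt the Cossarini–Sabourau construction of a minimal-area Finsler disk to the closed sphere by a doubling procedure. The Finsler metric will be realized as a Crofton (projective) metric associated to an explicit measure $\mu$ on a suitable family of straight segments, with full dihedral/hexagonal symmetry. The six cone singularities promised by the theorem are naturally explained as the six corners of a hexagonal fundamental domain, which become cone points after doubling.

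Concretely, I would first build a Finsler disk $(D,F)$ whose boundary $\partial D$ is a closed polygonal curve of length $L$ realizing (up to $\varepsilon$) the Cossarini–Sabourau area-minimizing configuration. The Finsler norm $F$ is defined through a Crofton measure $\mu$, so that for every regular curve $\gamma$
\[
\length_F(\gamma) \;=\; \frac{1}{2}\int \#(\gamma \cap \ell)\, d\mu(\ell),
\]
with a parallel integral-geometric expression for the Holmes–Thompson area. Doubling $(D,F)$ along $\partial D$ then produces a Finsler sphere $(\mathbb{S}^2, \widetilde F)$ with exactly six singularities (the doubled corners) and with the image of $\partial D$ a distinguished closed $\widetilde F$-geodesic of length $L$. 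The Holmes–Thompson area of $(\mathbb{S}^2,\widetilde F)$ is twice that of the disk, and the Cossarini–Sabourau bound is arranged so that this doubled area equals $3L^2/(4\pi)$, giving $\sys^2/\area = 4\pi/3$ in the limit.

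The heart of the proof, and its main obstacle, is to verify that the distinguished closed geodesic is actually the systole — i.e., that no other non-constant closed $\widetilde F$-geodesic on $\mathbb{S}^2$ is shorter than $L$. Because $\mathbb{S}^2$ is simply connected, there is no homotopy class to exploit, so the bound must be extracted from the geometry of $\mu$. The argument will distinguish closed curves that stay in one sheet of the doubling (where the disk's projective structure bounds their length from below) from those that cross $\partial D$ transversally; in the latter case the Crofton identity together with an intersection-count against the $\mu$-supported lines, combined with the dihedral symmetry, forces length $\geq L$. Once the systole bound is in hand, a standard smoothing replaces the piecewise-linear fiber norms by smooth strictly convex approximations, with the six singularities kept as genuine conical points, and the systolic ratio is preserved up to an arbitrarily small error, yielding the statement "arbitrarily close to $4\pi/3$."
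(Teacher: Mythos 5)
Your overall architecture matches the paper's: double the Cossarini--Sabourau hexagon along its boundary, keep the six corners as the six singularities, use the projective (Crofton) description of the metric, note that the Holmes--Thompson area doubles to $\frac{12}{\pi}$, and smooth at the end by regularizing the Crofton measure. But there is a genuine gap exactly where you flag "the heart of the proof": the systole lower bound is not an intersection-count formality, and your sketch of it would not go through as stated. First, the extremal closed geodesic is \emph{not} the boundary curve $\partial D$: the boundary of the hexagon is not a geodesic of this metric (restricted geodesics are Euclidean straight lines in each sheet, reflecting off the gluing edges), and the systole, equal to $4$, is realized by a curve running twice through a height of the hexagon (a doubled "generalized diameter"), as well as by certain oblique billiard trajectories. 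Second, your dichotomy "curves staying in one sheet vs.\ curves crossing $\partial D$" collapses: there are no trapped geodesics in the hexagon, so every closed geodesic is a periodic billiard trajectory crossing the edges, and all the work lies in that case.

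What is actually needed --- and what occupies essentially all of Section~4 of the paper --- is to unfold billiard trajectories via the hexagonal tiling of the plane, parametrize periodic ones by primitive lattice directions $v_{a,b}=ae_1+be_2$, and then run a case analysis over the first few hexagons visited, bounding each trajectory's length from below by $4$ using elementary contributions ("generalized diameters'' of length $2$, "generalized radii'' of length $1$, and "diamond'' configurations contributing $1$). A symmetry argument alone cannot replace this: the degenerate nature of $d_H$ (many curves of length zero) means a naive Crofton intersection count against the three families of lines supporting $\mu$ gives no uniform lower bound without tracking \emph{which} of the segments $\overline{L_0},\overline{L_1},\overline{L_2}$ each portion of the trajectory projects onto, and whether those projections overlap. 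Until you supply this combinatorial analysis (or an equivalent lower bound valid for every primitive direction $(a,b)$ and every starting point $x$), the claim $\sys = 4$ --- and hence the ratio $\frac{4\pi}{3}$ --- is unproved.
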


\begin{remark}
The metric appearing in the statement of theorem \ref{thm:main} is only defined on $\mathbb{S}^2$ minus 6 points. We do not know how to remove singularities (nor if it is possible to do so). We discuss the regularity of this metric in Section \ref{sec:regularity}.
\end{remark}

This result gives a hint on the methods relevant to attack the Croke conjecture. Since the conjecture is already false for Finsler manifolds, none of the generic metric techniques may apply and the technology for the proof must stay inside classical Riemannian geometry.

In the preliminary section \ref{sec:preliminaries}, we recall the definition of Finsler surfaces and their Holmes-Thompson volumes, in section \ref{sec:cossarinisaboura}, we present the construction of the Cossarini-Sabourau Finlser disk and in section \ref{sec:conclusion}, we perform a construction similar to the Calabi-Croke metric with the Cossarini-Sabourau disk and show that the metric we obtain is approximated by Finsler metrics whose systolic ratio converge to $\frac{4\pi}{3}$.

\section{Finsler surfaces.}\label{sec:preliminaries}

Finsler metrics are a relaxation of Riemannian metrics, similar than enlarging the class of ellipsoids by arbitrary convex sets. More precisely:

\begin{definition}
Let $S$ be a smooth surface. A (smooth) Finsler metric on $S$ is a positive function $F:TS\rightarrow \mathbb{R}_+$ satisfying the following conditions:
\begin{itemize}
\item The restriction of $F$ to any $T_xS$ is a symmetric norm.
\item $F$ is smooth outside the zero section.
\item For any $v\in TS$ the Hessian of $F^2$ at $v$ is positive definite.
\end{itemize}
\end{definition}

It follows from the definition that the unit ball
\[B_x=\set{v\in T_xS\;\;F(x,v)\leqslant 1}\]
is a strict convex set.

The systolic ratio features the area of the metric, canonically defined for a Riemannian metric but different choices of area measures coexist for a Finsler metric. Here, we consider the so-called Holmes-Thompson area: its symplectic nature makes it easily computable with Blashke and Santal\'o formulas.

In order to define the Holmes-Thompson area, we denote by $F_x$ the norm on $T_xS$ given by $F$ and by $F_x^*$ the dual norm on $T_x^*S$ (whose unit ball is the dual convex set $B_x^*$ of $B_x$).

The cotangent space naturally carries a symplectic structure expressed in coordinates $(x_1,x_2,\xi_1,\xi_2)$ by
\[\omega=d\xi_1\wedge dx_1 + d\xi_2\wedge dx_2.\]
Note that $\Omega=\frac{1}{2}\omega\wedge\omega$ is a volume form on $T^*S$. The Holmes-Thompson area is the push-forward onto $S$ by the canonical projection of this volume form restricted to the unit co-ball bundle: 

\begin{definition}
Let $A$ be a Borel set in $S$. Its Holmes-Thompson area is given by
\[\area(A)=\frac{1}{\pi}\int_{\cup_{x\in A}B_x^*S}\Omega.\]
\end{definition}
The constant $\frac{1}{\pi}$ is a canonical normalization constant, so that for instance the area of a Euclidean unit ball is exactly $\pi$. Throughout this text is hidden the fact that the Holmes-Thompson area is computed by Blaschke and Santalo formulas, as in \cite[Section 3]{cossarinisabourau}. We consider again this symplectic measure in Section \ref{sec:regularity}.

\section{The Cossarini-Sabourau Finsler disk.}\label{sec:cossarinisaboura}

In this section we recall the construction of the Cossarini-Sabourau Finsler disk (see \cite[Section 11]{cossarinisabourau} for details).

We consider a regular hexagon $H$, centered at $0$ in the complex Euclidean plane and whose vertices are located at $\frac{2}{\sqrt{3}}e^{ik\pi /3}$ for $k=0,\cdots 5$. The constant $\frac{2}{\sqrt{3}}$ is chosen in such a way that the metric we are going to describe turns the hexagon into a unit ball. To define this metric, we denote by $L_k$ the lines $\set{te^{i\frac{\pi}{6}+k\frac{2\pi}{3}}\;\;t\in\mathbb{R}}$, by $\overline{L_k}$ the unit segment $\set{te^{i\frac{\pi}{6}+k\frac{2\pi}{3}}\;\;t\in\crochet{0,1}}$, by $\pi_k$ the orthogonal projection onto $L_k$ and by $\overline{\pi_k}$ the trace of the projection $\pi_k$ on $\overline{L_k}\subset L_k$. Finally, we denote by $\abs{\cdot}$ the usual Lebesgue measure on any of the lines $L_k$. See figure \ref{fig:hexagon}.

\definecolor{ffqqqq}{rgb}{1,0,0}
\definecolor{zzzzff}{rgb}{0.6,0.6,1}
\begin{figure}[!h]
\begin{tikzpicture}[line cap=round,line join=round,x=1.0cm,y=1.0cm]
\clip(-2.12,1.28) rectangle (10.48,9.96);
\fill[line width=0.4pt,color=zzzzff,fill=zzzzff,fill opacity=0.1] (1.7,2.04) -- (5.74,2.04) -- (7.76,5.54) -- (5.74,9.04) -- (1.7,9.04) -- (-0.32,5.54) -- cycle;
\draw [line width=0.4pt,color=zzzzff] (1.7,2.04)-- (5.74,2.04);
\draw [line width=0.4pt,color=zzzzff] (5.74,2.04)-- (7.76,5.54);
\draw [line width=0.4pt,color=zzzzff] (7.76,5.54)-- (5.74,9.04);
\draw [line width=0.4pt,color=zzzzff] (5.74,9.04)-- (1.7,9.04);
\draw [line width=0.4pt,color=zzzzff] (1.7,9.04)-- (-0.32,5.54);
\draw [line width=0.4pt,color=zzzzff] (-0.32,5.54)-- (1.7,2.04);
\draw [line width=0.4pt] (3.72,5.54)-- (-0.32,5.54);
\draw [line width=0.4pt] (3.72,5.54)-- (1.7,9.04);
\draw [line width=0.4pt] (3.72,5.54)-- (5.74,9.04);
\draw [line width=0.4pt] (3.72,5.54)-- (7.76,5.54);
\draw [line width=0.4pt] (3.72,5.54)-- (5.74,2.04);
\draw [line width=0.4pt] (3.72,5.54)-- (1.7,2.04);
\draw [line width=1.2pt,color=ffqqqq] (0.69,7.29)-- (3.72,5.54);
\draw [line width=1.2pt,color=ffqqqq] (3.72,5.54)-- (6.75,7.29);
\draw [line width=1.2pt,color=ffqqqq] (3.72,5.54)-- (3.72,2.04);
\draw (0.52,4.78) node[anchor=north west] {$H$};
\draw (3.88,2.93) node[anchor=north west] {$\overline{L_2}$};
\draw (5.92,6.73) node[anchor=north west] {$\overline{L_0}$};
\draw (0.74,6.99) node[anchor=north west] {$\overline{L_1}$};
\end{tikzpicture}
\caption{Geometry of the distance $d_H$.}
\label{fig:hexagon}
\end{figure}
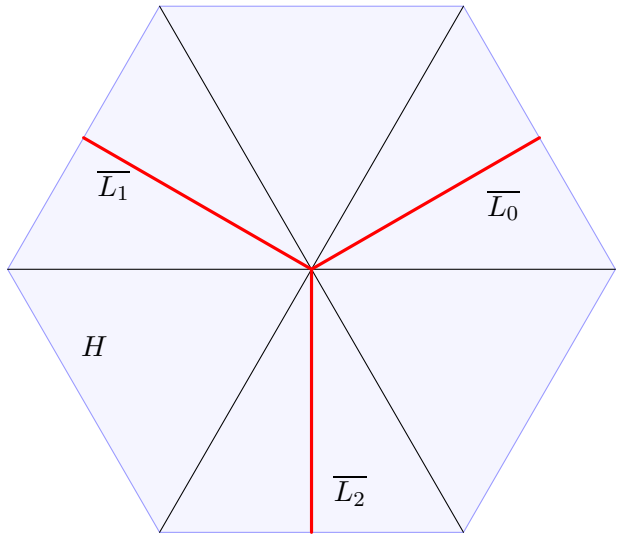

\begin{definition}
\begin{enumerate}
\item Let $\gamma:\crochet{0,1}\to H$ be a piecewise smooth curve. We define its length by
\[l(\gamma)=\sum_{k=0,1,2}\abs{\overline{\pi_k}(\gamma(\crochet{0,1}))}.\] 
\item The pseudo-metric on $H$ is the one associated to this length structure:
\[d_H(x,y)=\inf_\gamma l(\gamma),\]
where $\inf$ is taken on all piecewise smooth curves joining $x$ to $y$.
\end{enumerate}
\end{definition}

\begin{remark}
We actually rescaled the Cossarini-Sabourau metric by a factor 4. This doesn't change the systolic ratio.
\end{remark}

We are now interested in the metric properties of $(H,d_H)$. Note first that $d_H$ is not an actual metric because there are different points at distance 0. Indeed a straight line crossing one of the $\overline{L_i}$ orthogonally and sufficiently small has length 0. We could consider the quotient of $H$ by points at distance 0 but the description of the geodesic flow is easier in the hexagon.

From \cite{cossarinisabourau}, we sumarize the relevant properties (a sketch of the argument is presented in Section \ref{sec:regularity}).

\begin{proposition}{\cite[Lemma 11.5, Poposition 11.7, Remark 11.3]{cossarinisabourau}}\label{prop:metricproperties}
\begin{enumerate}
\item The pseudo-metric space $(H,d_H)$ is a limit for the $L^\infty$ topology of Finsler metric spaces.
\item Through $L^\infty$ approximations by Finsler metrics, the Holmes-Thompson area converges to $\frac{6}{\pi}$.
\item The geodesics of the Finlser approximations are the Euclidean straight lines.
\end{enumerate}
\end{proposition}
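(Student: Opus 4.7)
The plan is to realize $d_H$ as an $L^\infty$ limit of projective Finsler metrics on $H$ obtained by smoothing a natural Crofton measure on the space of straight lines. For each $k\in\{0,1,2\}$, let $\nu_k$ be the Lebesgue measure on the one-parameter family of lines perpendicular to $L_k$ whose foot lies in $\overline{L_k}$, parametrized by that foot. A direct Fubini argument yields, for any piecewise smooth curve $\gamma\subset H$,
$$\abs{\overline{\pi_k}(\gamma)}=\int \mathbf{1}[\ell\cap\gamma\neq\emptyset]\, d\nu_k(\ell),$$
so with $\mu:=\nu_0+\nu_1+\nu_2$ one has $l(\gamma)=\int \mathbf{1}[\ell\cap\gamma\neq\emptyset]\, d\mu(\ell)$. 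For Euclidean segments the indicator coincides with the crossing count off a measure-zero set of tangent lines, so in fact $l(s)=\int \#(s\cap\ell)\,d\mu(\ell)$.

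For $\eps>0$, I would build $\mu_\eps$ by spreading the direction-Dirac of each $\nu_k$ uniformly over an angular window of width $2\eps$ about the normal to $L_k$, rescaling to preserve the total mass. This produces a smooth positive density on the space of lines meeting $H$. By the standard correspondence between positive Crofton densities and projective Finsler metrics (Ambartzumian, Busemann, \'Alvarez Paiva--Fern\'andez), $\mu_\eps$ induces a smooth strictly convex Finsler metric $F_\eps$ on $H$ whose length functional is exactly the Crofton integral $\length_{F_\eps}(\gamma)=\int \#(\gamma\cap\ell)\,d\mu_\eps(\ell)$, with Euclidean straight lines as minimizing geodesics; this gives (3).

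For (1), I first note that $d_H$ itself is realized by straight segments: since $\pi_k(\gamma)$ is connected and contains both $\pi_k(x)$ and $\pi_k(y)$, one has $\abs{\overline{\pi_k}(\gamma)}\ge\abs{\overline{\pi_k}(\overline{xy})}$, hence $l(\gamma)\ge l(\overline{xy})$ for every curve from $x$ to $y$. Likewise $d_{F_\eps}(x,y)=\length_{F_\eps}(\overline{xy})$. The weak convergence $\mu_\eps\to\mu$, together with continuity of the indicator $\ell\mapsto\mathbf{1}[s\cap\ell\neq\emptyset]$ off the tangent-line set (a null set for $\mu$ for any fixed segment), then yields $\length_{F_\eps}(s)\to l(s)$ uniformly in $s$ over compact families of segments in $H$. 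Hence $d_{F_\eps}\to d_H$ uniformly on $H\times H$, which proves (1).

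Finally, for (2), I would invoke the Blaschke--Santal\'o Crofton identity for projective Finsler metrics (cf.\ \cite[Section 3]{cossarinisabourau}), which expresses $\area(H)$ as a double integral against $\mu_\eps\otimes\mu_\eps$ of a kernel proportional to $\sin\angle(\ell_1,\ell_2)\cdot\mathbf{1}[\ell_1\cap\ell_2\in H]$ (the constant absorbing the $\frac{1}{\pi}$ of the definition of $\area$). Passing to the limit $\eps\to 0$, pairs with both $\ell_i$ in the same family $\nu_k$ contribute nothing (their intersection locus is at most one-dimensional), so only the six cross pairs $k_1\neq k_2$ survive. Each has a fixed crossing angle of $\pi/3$, and a direct planar computation -- identifying the locus $\{\ell_1\cap\ell_2\in H\}$ for each ordered pair $(k_1,k_2)$ as an explicit rhombus whose vertices are the center and two adjacent vertices of the hexagon -- yields six symmetric contributions summing to $\area(H)=\frac{6}{\pi}$. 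I expect the main obstacle to be the bookkeeping in this last step: precisely tracking the normalization constants in the Blaschke--Santal\'o formula under the $\frac{1}{\pi}$ convention of Section \ref{sec:preliminaries}, and verifying that the diagonal $k_1=k_2$ contributions genuinely vanish in the limit. The other items rely on well-established facts about projective Finsler metrics and weak convergence of measures.
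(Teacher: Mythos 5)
The paper itself does not prove this proposition---it is quoted from Cossarini--Sabourau, with the underlying mechanism only sketched in Section~\ref{sec:regularity}---and your route (a Crofton measure concentrated on the three pencils of lines orthogonal to the $\overline{L_k}$, smoothing, the projective-Finsler correspondence for (1) and (3), Blaschke--Santal\'o for (2)) is exactly that mechanism. The identity $l(\gamma)=\int\mathbf{1}[\ell\cap\gamma\neq\emptyset]\,d\mu$, the reduction of $d_H$ to straight segments via monotonicity of projections, and the weak-convergence argument for (1) are correct.

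The genuine gap is in the smoothing step. Spreading each $\nu_k$ over an angular window of width $2\eps$ does \emph{not} produce ``a smooth positive density on the space of lines meeting $H$'': the resulting $\mu_\eps$ is supported on lines whose directions lie in three arcs of total angular measure $6\eps$ and whose feet stay near the $\overline{L_k}$. The unit co-balls of the induced projective metric are then zonoids generated by at most three short arcs of directions; away from the center of $H$ a point is met by lines of only one or two of the families, so these zonoids are thin slivers or near-parallelograms whose boundaries contain straight segments. Hence the Hessian of $F_\eps^2$ is not positive definite, $F_\eps$ is not a Finsler metric in the sense of Section~\ref{sec:preliminaries}, and the correspondence theorem you invoke---which requires a \emph{positive} smooth Crofton density, cf.\ Theorem~\ref{thm:regularity}---does not apply. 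The repair is exactly what \cite[Paragraph 11.2]{cossarinisabourau} and Section~\ref{sec:regularity} do: replace $\mu$ by $\mu+\eps\,dx\,d\theta$ before mollifying; the added term perturbs lengths and areas only by $O(\eps)$, so all your limit statements survive. Separately, in (2) the locus $\set{\ell_1\cap\ell_2\in H}$ for a cross pair is not the full rhombus you describe: its fourth vertex sits at distance $2$ from the center, outside $H$, and only the triangular half of the rhombus (one of the six equilateral triangles of $H$, with vertices the center and two adjacent hexagon vertices) actually lies in $H$. Since you already flag the normalization bookkeeping as the unverified step, note that this factor of $2$ is precisely the kind of slip that step must catch before $\frac{6}{\pi}$ can be asserted.
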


\begin{remark}\label{rmk:projectivity}
The geodesic flow of $(H,d_H)$ is not well defined: $0$ is a branching point and there is no uniqueness of geodesics.

Approximating $(H,d_H)$ by projective Finsler spaces has the double interest of transforming the pseudo-metric to an actual metric and also selecting the straight lines as geodesics.

In the rest of this paper, we consider only the restricted geodesic flow on $(H,d_H)$ which consists in straight lines in $H$. Every other geodesic is irrelevant to compute the systolic ratio.
\end{remark}

\section{The Calabi-Croke Cossarini-Sabourau Finsler sphere and its systolic ratio.}\label{sec:conclusion}

We now perform a construction similar to the Calabi-Croke metric on $\mathbb{S}^2$: we glue two Cossarini-Sabourau disks along the edges. The resulting metric in not quite Finsler but is approximated by the glueing of two Finlser disks. Since the approximation preserves the systolic ratio, we reason with the limiting (non Finlser) metric that we denote $(\mathbb{S}^2,d_H)$.

Note that the metric is not defined at the vertices of the hexagons. Consequently, there is no geodesic passing through a vertex. Moreover the metric is (a priori) only continuous on the edges (indeed the Finsler metric is well defined on the edges of the Cossarini-Sabourau hexagon and, since we took two copies of the same hexagon, there is a well-defined metric on the glueing edges and the resulting metric is continuous). See Section \ref{sec:regularity} for a comment on the regularity. 

We denote by $p$ the canonical projection $p:\mathbb{S}^2\rightarrow H$. Directly following from the fact that (restricted) geodesics are straight lines, we get a description of the geodesic flow in $H$.

\begin{proposition}
The image of restricted geodesics through $p$ consists in billiard trajectories inside $H$.
\end{proposition}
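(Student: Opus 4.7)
The plan is to reduce the proposition to the standard billiard--geodesic unfolding.

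First, I decompose any restricted geodesic $\gamma$ of $\mathbb{S}^2$ into maximal subsegments entirely contained in one of the two hexagonal copies $H_+$ or $H_-$. By Remark~\ref{rmk:projectivity} applied in each copy, these subsegments are Euclidean straight lines, and $p$ sends each of them to a Euclidean straight segment in $H$.

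Second, I analyze a transition point $c$ at which $\gamma$ crosses from $H_+$ to $H_-$. Since the metric is not defined at the vertices, $c$ is necessarily an interior point of some edge $E\subset \partial H$, and $p(c)\in \partial H$. Therefore $p(\gamma)$ is a union of Euclidean straight segments whose endpoints alternate between the interior of $H$ and the edges of $\partial H$. To identify the reflection rule at $c$, I unfold: reflect $H_-$ across the Euclidean line supporting $E$, placing it in the plane adjacent to $H_+$. The two subsegments of $\gamma$ adjacent to $c$ lift to Euclidean straight segments on either side of $E$ in this unfolded plane, and a first-variation argument shows that length minimization forces them to form a single straight line across $E$. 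Re-folding by reflection across $E$ then yields the usual angle-of-incidence-equals-angle-of-reflection rule for $p(\gamma)$.

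The main obstacle is justifying the first-variation step, because the Cossarini--Sabourau metric is degenerate near edges: tangent vectors along an edge have zero norm. I would circumvent this by working with the smooth Finsler approximations provided by Proposition~\ref{prop:metricproperties}. For each approximation, the doubled sphere is a genuine Finsler surface and the continuation rule at the common boundary is the Finslerian Snell law; since both sides carry the same metric, this Snell law degenerates to Euclidean reflection across $E$. Passing to the limit produces the desired billiard description of $p(\gamma)$.
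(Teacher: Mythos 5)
Your argument is correct and is essentially the paper's own (which is far more compressed): straight segments inside each hexagon via Proposition~\ref{prop:metricproperties}, transversal crossings at interior points of edges since no geodesic passes through a vertex, and the reflection rule obtained by unfolding across the edge and re-folding via $p$. One phrasing quibble: since both sides of the glueing edge carry the same metric, the ``Finslerian Snell law'' gives straight transmission in the unfolded chart rather than reflection --- the Descartes reflection law arises only from re-folding through $p$, which is exactly what your unfolding paragraph already states correctly.
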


\begin{proof}
Inside a hexagon, geodesics are the straight lines. When a geodesic hits an edge, it can only do so transversaly, since there is no geodesic passing through a vertex, and is continued in a straight line on the other hexagon with a direction given by the velocity of the trajectory on the edge. Moreover, on an edge, the direction of a geodesic in one hexagon is mapped through $p$ to the reflected direction (according to Descartes law of reflection).
\end{proof}

The rest of this paper is devoted to the computation of the systole. Precisely:

\begin{proposition}\label{prop:systole}
The systole of $(\mathbb{S}^2,d_H)$ for the restricted geodesic flow is $4$ and is achieved for instance by a curve running twice through a height of $H$.
\end{proposition}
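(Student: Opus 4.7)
The plan is to classify closed geodesics on $(\mathbb{S}^2,d_H)$ via the developing map, reducing the systolic lower bound to a lattice minimization for a Finsler norm on $\mathbb{R}^2$.

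First, I will verify the upper bound by direct computation. The height of $H$ in direction $e^{i\pi/6}$ is the segment from $-e^{i\pi/6}$ to $+e^{i\pi/6}$, whose projections onto the three unit segments $\overline{L_k}$ have measures $1,\tfrac{1}{2},\tfrac{1}{2}$, yielding a $d_H$-length of $2$. Since the midpoints of the hexagon's edges are smooth points of $(\mathbb{S}^2,d_H)$, this segment continues into the second hexagon copy along the corresponding height there, and the resulting closed curve on $\mathbb{S}^2$ has total length $4$.

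Second, I will establish that any closed geodesic develops to a straight segment in the hexagonally tiled plane. Because $(\mathbb{S}^2,d_H)$ is flat away from its six cone points (the hexagon vertices), a closed geodesic avoids them, and by Proposition~\ref{prop:metricproperties} it is straight in each hexagon it visits. Unfolding across each crossed edge, the geodesic becomes a straight segment in the plane from some $x\in H$ to $x+T$, with $T$ in the translation lattice $\Lambda$ of the hexagonal tiling, generated by $2e^{i\pi/6}$ and $2e^{i\pi/2}$. For the orbit to close on $\mathbb{S}^2$, the composition $\phi_n$ of the reflections across the $n$ crossed edges must equal the pure translation by $T$; in particular $n$ must be even and $\phi_n$ must be a translation rather than a nontrivial rotation. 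The geodesic's length then equals the Finsler norm $F(T)=\tfrac{1}{2}\sum_{k=0}^{2}|\langle T,e^{i\theta_k}\rangle|$, where $\theta_k=\pi/6+k\cdot 2\pi/3$ are the three apothem angles.

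Third, I will carry out a short case analysis on small values of $F$. Parametrizing $T=a(\sqrt{3},1)+b(0,2)$, one finds $F(T)=\tfrac{1}{2}(|2a+b|+|b-a|+|a+2b|)$, which is always a nonnegative integer because the signed sum $2a+4b$ is even; equivalently $F(T)=\max_k|\langle T,e^{i\theta_k}\rangle|$, using the identity $\sum_k e^{i\theta_k}=0$. The only nonzero lattice vectors with $F(T)<4$ are the six apothem vectors (where $F=2$, $|T|=2$) and the six edge-direction vectors (where $F=3$, $|T|=2\sqrt{3}$). For an apothem vector, the straight line from $x$ to $x+T$ crosses a single edge, so $n=1$ is odd and the candidate orbit fails to close on $\mathbb{S}^2$. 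For an edge-direction vector the line crosses exactly two edges whose axes meet at a $60^{\circ}$ angle, so $\phi_2$ is a rotation by $120^{\circ}$ rather than a translation. Hence every closed geodesic satisfies $F(T)\geq 4$, and equality is attained precisely by the six doubled-apothem vectors $T=4e^{i(\pi/6+k\pi/3)}$, which are exactly the height orbits of length $4$.

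The hard part will be the rigorous implementation of the developing map and the holonomy bookkeeping: tracking which edges are crossed by the unfolded line as a function of the starting point $x$ and verifying that, in the two borderline cases ruled out above, the rotational type of $\phi_n$ is robust under perturbations of $x$ and does not degenerate when the straight line approaches a cone point.
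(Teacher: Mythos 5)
Your upper bound is fine and agrees with the paper: a height of $H$ has $d_H$-length $1+\tfrac12+\tfrac12=2$ and doubles to a closed curve of length $4$. The unfolding framework is also the right one (and is the paper's): lift to straight lines in the hexagonally tiled plane, and decide closure on $\mathbb{S}^2$ from the parity and the rotational part of the composition of edge reflections. The fatal problem is the identity on which your entire lower bound rests, namely that a closed geodesic with unfolding translation $T$ has length $F(T)=\tfrac12\sum_k\abs{\langle T,e^{i\theta_k}\rangle}$. This is false. The Cossarini--Sabourau length of a chord records, for each hexagon $P$ traversed and each $k$, only the Lebesgue measure of the part of the projection landing on the \emph{unit half-height} $\overline{L_k^P}$ --- one of the two halves of the full height, and which half is active flips from tile to tile because adjacent tiles are reflections, not translates, of one another. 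The density seen by a line is thus the indicator of a union of half-hexagons; replacing it by its mean value $\tfrac12$ is exactly what produces your $F$, and that averaging is not valid for individual lines. Concretely, the geodesic $\gamma_{2,1,x}$ that the paper exhibits as a systole has $T=2e_1+e_2$, i.e.\ $(a,b)=(2,1)$, so $F(T)=\tfrac12(5+1+4)=5$, while its actual length is $4$. Hence $\ell(\gamma)\neq F(T)$ for the very orbit realizing the systole, your enumeration of length-$4$ orbits misses it, and --- worse --- since $F(T)>\ell(\gamma)$ can occur, a lower bound on $F$ over the lattice does not bound the systole from below. The length is not even a function of $T$ alone: the horizontal closed geodesics, all with $T=(6\sqrt3,0)$ and $F(T)=9$, have length $6$ or $12$ according to whether the line runs above or below the row of hexagon centers. (A quick independent check that the metric is not the translation-invariant norm $F$: that norm would give the hexagon Holmes--Thompson area $\tfrac9\pi$, not the $\tfrac6\pi$ of Proposition \ref{prop:metricproperties}.)

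What you have deferred to ``the hard part'' (holonomy bookkeeping) is actually the easy part; the genuinely hard part is the position-dependent length accounting that your norm reduction was meant to bypass. The paper attacks it head on: it classifies trajectories by the first few hexagons visited and accumulates length from three kinds of certified contributions --- generalized diameters (a chord joining opposite edges of one hexagon always contributes $2$, because for such a chord the active and inactive halves do balance), generalized radii (contribution $1$), and ``diamonds'' straddling an edge shared by two hexagons, where the two relevant half-heights on either side concatenate into a full unit segment and jointly contribute $1$. If you want to salvage your approach, you would need to replace $F$ by the true, non-translation-invariant length of the unfolded line, i.e.\ for each $k$ the measure of the set $\{t\in[0,1]: \text{some active half-height in direction }k\text{ is hit at parameter }t\}$; at that point you are doing the paper's case analysis in different notation.
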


\begin{proof}
Subsequently the most important property of the hexagon is that it tiles the plane, allowing to precisely describe the geodesic flow on $(\mathbb{S}^2,d_H)$. This procedure is classical but we recall it for completeness (see \cite{gutkinbilliardsurvey}).


Inside $H$, there is no trapped geodesic so we may assume that the starting point of a geodesic belongs to an edge of $H$. Without loss of generality, this edge is supported on the $x$ axis in $\mathbb{R}^2$ and contains 0. In each hexagon, there is two different types of equilaleral triangles and we may also assume the the initial edge belongs to a triangle carrying a metric $l^1$ (such triangles are the ones containing none of the segment $\overline{L_i}$). Indeed, any point in $H$ is connected to a point in a $l^1$-type triangle by a geodesic of length 0. Hence we don't change the systole by assuming that the initial point of every geodesic lies on the edge of a $l^1$-type triangle. 

Recall that we chose the normalization in such a way that the length of the edges of $H$ are $\frac{2}{\sqrt{3}}$ (the heights have length 1).

Let
\[e_1=\begin{pmatrix}
\frac{2}{\sqrt{3}}\\1
\end{pmatrix}
\;\;\mbox{ and }\;\;e_2=\begin{pmatrix}
0\\2
\end{pmatrix}.
\]
We tile the plane by hexagons so that, to every point of the lattice $\mathbb{Z}e_1\oplus\mathbb{Z}e_2$, corresponds a unique hexagon. To each hexagon is attributed a name according to each point of the lattice it corresponds. For instance the original hexagon $H$ is named $(0,0)$, the one just above is $(0,1)$, the one image of the original one by $e_1$ is $(1,0)$ and so on.

To every billiard geodesic in $H$ corresponds a straight line in $\mathbb{R}^2$. Indeed, instead of bouncing on an edge, a straight line is continued in a reflected hexagon along the edge.

\definecolor{ttffqq}{rgb}{0.2,1,0}
\definecolor{wwffqq}{rgb}{0.4,1,0}
\definecolor{fftttt}{rgb}{1,0.2,0.2}
\definecolor{qqqqff}{rgb}{0,0,1}
\definecolor{uququq}{rgb}{0.25,0.25,0.25}
\definecolor{ttccff}{rgb}{0.6,0.6,1}
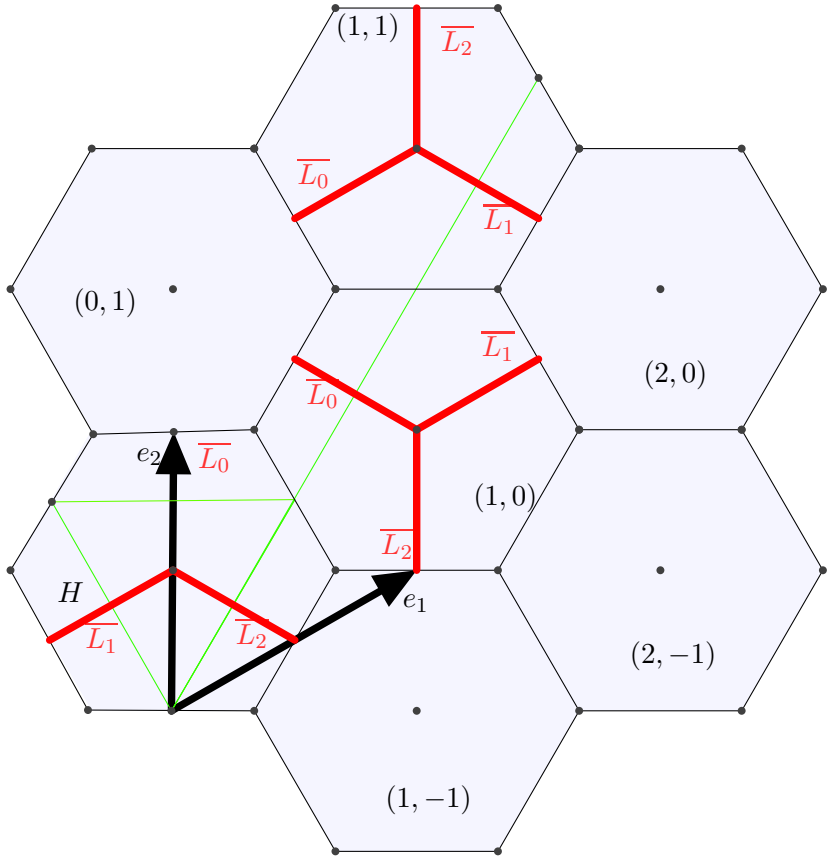
\begin{figure}[!h]
\begin{tikzpicture}[line cap=round,line join=round,>=triangle 45,x=1.0cm,y=1.0cm]
\clip(-1.32,-4.11) rectangle (19.47,8.28);
\fill[line width=2.8pt,color=ttccff,fill=ttccff,fill opacity=0.1] (1.34,-1.64) -- (3.5,-1.64) -- (4.58,0.23) -- (3.5,2.1) -- (1.34,2.1) -- (0.26,0.23) -- cycle;
\fill[line width=2.8pt,color=ttccff,fill=ttccff,fill opacity=0.1] (3.5,2.1) -- (4.58,0.23) -- (6.74,0.23) -- (7.82,2.1) -- (6.74,3.97) -- (4.58,3.97) -- cycle;
\fill[color=ttccff,fill=ttccff,fill opacity=0.1] (1.34,2.1) -- (3.5,2.1) -- (4.58,3.97) -- (3.5,5.84) -- (1.34,5.84) -- (0.26,3.97) -- cycle;
\fill[line width=2.8pt,color=ttccff,fill=ttccff,fill opacity=0.1] (6.74,0.23) -- (4.58,0.23) -- (3.5,-1.64) -- (4.58,-3.51) -- (6.74,-3.51) -- (7.82,-1.64) -- cycle;
\fill[line width=2.8pt,color=ttccff,fill=ttccff,fill opacity=0.1] (4.58,3.97) -- (6.74,3.97) -- (7.82,5.84) -- (6.74,7.71) -- (4.58,7.71) -- (3.5,5.84) -- cycle;
\fill[color=ttccff,fill=ttccff,fill opacity=0.1] (6.74,3.97) -- (7.82,2.1) -- (9.98,2.1) -- (11.06,3.97) -- (9.98,5.84) -- (7.82,5.84) -- cycle;
\fill[color=ttccff,fill=ttccff,fill opacity=0.1] (6.74,0.23) -- (7.82,-1.64) -- (9.98,-1.64) -- (11.06,0.23) -- (9.98,2.1) -- (7.82,2.1) -- cycle;
\draw [line width=1.2pt,color=fftttt] (2.42,0.23)-- (2.4,-1.64);
\draw [->,line width=2.8pt] (2.4,-1.64) -- (5.66,0.23);
\draw [->,line width=2.8pt] (2.4,-1.64) -- (2.43,2.07);
\draw [color=wwffqq] (2.4,-1.64)-- (4.04,1.17)-- (0.81,1.14);
\draw [color=ttffqq] (0.81,1.14)-- (2.4,-1.64)-- (4.04,1.17);
\draw [color=ttffqq] (2.4,-1.64)-- (7.28,6.78);
\draw (1.34,5.84)-- (0.26,3.97);
\draw (0.26,3.97)-- (1.36,2.04);
\draw (1.36,2.04)-- (3.5,2.1);
\draw (3.5,2.1)-- (4.58,3.97);
\draw (4.58,3.97)-- (3.5,5.84);
\draw (1.34,5.84)-- (3.5,5.84);
\draw (3.5,5.84)-- (4.58,7.71);
\draw (4.58,7.71)-- (6.74,7.71);
\draw (4.58,3.97)-- (6.74,3.97);
\draw (7.82,5.84)-- (6.74,7.71);
\draw (7.82,5.84)-- (6.74,3.97);
\draw (7.82,5.84)-- (9.98,5.84);
\draw (11.06,3.97)-- (9.98,5.84);
\draw (9.98,2.1)-- (11.06,3.97);
\draw (7.82,2.1)-- (9.98,2.1);
\draw (6.74,3.97)-- (7.82,2.1);
\draw (6.74,0.23)-- (7.82,2.1);
\draw (4.58,0.23)-- (6.74,0.23);
\draw (3.5,2.1)-- (4.58,0.23);
\draw (1.36,2.04)-- (0.26,0.23);
\draw (1.29,-1.63)-- (0.26,0.23);
\draw (3.5,-1.64)-- (1.29,-1.63);
\draw (4.58,0.23)-- (3.5,-1.64);
\draw (4.58,-3.51)-- (3.5,-1.64);
\draw (6.74,-3.51)-- (4.58,-3.51);
\draw (7.82,-1.64)-- (6.74,-3.51);
\draw (6.74,0.23)-- (7.82,-1.64);
\draw (9.98,-1.64)-- (7.82,-1.64);
\draw (11.06,0.23)-- (9.98,-1.64);
\draw (9.98,2.1)-- (11.06,0.23);
\draw (5.34,0.04) node[anchor=north west] {$ e_1 $};
\draw (1.8,1.99) node[anchor=north west] {$ e_2 $};
\draw (0.75,0.2) node[anchor=north west] {$ H $};
\draw [color=fftttt](2.62,2.07) node[anchor=north west] {$ \overline{L_0} $};
\draw [color=fftttt](4.06,2.92) node[anchor=north west] {$ \overline{L_0} $};
\draw [color=fftttt](3.94,5.85) node[anchor=north west] {$ \overline{L_0} $};
\draw [color=fftttt](1.12,-0.33) node[anchor=north west] {$ \overline{L_1} $};
\draw [color=fftttt](6.39,3.55) node[anchor=north west] {$ \overline{L_1} $};
\draw [color=fftttt](6.41,5.24) node[anchor=north west] {$ \overline{L_1} $};
\draw [color=fftttt](3.12,-0.27) node[anchor=north west] {$ \overline{L_2} $};
\draw [color=fftttt](5.05,0.92) node[anchor=north west] {$ \overline{L_2} $};
\draw [color=fftttt](5.85,7.62) node[anchor=north west] {$ \overline{L_2} $};
\draw (0.96,4.11) node[anchor=north west] {$ (0,1) $};
\draw (6.28,1.52) node[anchor=north west] {$ (1,0) $};
\draw (4.45,7.77) node[anchor=north west] {$ (1,1) $};
\draw (5.11,-2.49) node[anchor=north west] {$(1,-1)$};
\draw (8.54,3.16) node[anchor=north west] {$ (2,0) $};
\draw (8.36,-0.58) node[anchor=north west] {$ (2,-1) $};
\draw [line width=2.8pt,color=ffqqqq] (2.42,0.23)-- (0.78,-0.7);
\draw [line width=2.8pt,color=ffqqqq] (2.42,0.23)-- (4.04,-0.7);
\draw [line width=2.8pt,color=ffqqqq] (5.66,2.1)-- (4.04,3.04);
\draw [line width=2.8pt,color=ffqqqq] (5.66,2.1)-- (5.66,0.23);
\draw [line width=2.8pt,color=ffqqqq] (5.66,2.1)-- (7.28,3.04);
\draw [line width=2.8pt,color=ffqqqq] (5.66,5.84)-- (4.04,4.91);
\draw [line width=2.8pt,color=ffqqqq] (5.66,5.84)-- (7.28,4.91);
\draw [line width=2.8pt,color=ffqqqq] (5.66,5.84)-- (5.66,7.71);
\begin{scriptsize}
\fill [color=uququq] (4.58,0.23) circle (1.5pt);
\fill [color=uququq] (3.5,2.1) circle (1.5pt);
\fill [color=uququq] (0.26,0.23) circle (1.5pt);
\fill [color=uququq] (6.74,0.23) circle (1.5pt);
\fill [color=uququq] (7.82,2.1) circle (1.5pt);
\fill [color=uququq] (6.74,3.97) circle (1.5pt);
\fill [color=uququq] (4.58,3.97) circle (1.5pt);
\fill [color=uququq] (4.58,3.97) circle (1.5pt);
\fill [color=uququq] (3.5,5.84) circle (1.5pt);
\fill [color=uququq] (1.34,5.84) circle (1.5pt);
\fill [color=uququq] (0.26,3.97) circle (1.5pt);
\fill [color=uququq] (3.5,-1.64) circle (1.5pt);
\fill [color=uququq] (4.58,-3.51) circle (1.5pt);
\fill [color=uququq] (6.74,-3.51) circle (1.5pt);
\fill [color=uququq] (7.82,-1.64) circle (1.5pt);
\fill [color=uququq] (7.82,5.84) circle (1.5pt);
\fill [color=uququq] (6.74,7.71) circle (1.5pt);
\fill [color=uququq] (4.58,7.71) circle (1.5pt);
\fill [color=uququq] (3.5,5.84) circle (1.5pt);
\fill [color=uququq] (9.98,2.1) circle (1.5pt);
\fill [color=uququq] (11.06,3.97) circle (1.5pt);
\fill [color=uququq] (9.98,5.84) circle (1.5pt);
\fill [color=uququq] (7.82,5.84) circle (1.5pt);
\fill [color=uququq] (9.98,-1.64) circle (1.5pt);
\fill [color=uququq] (11.06,0.23) circle (1.5pt);
\fill [color=uququq] (9.98,2.1) circle (1.5pt);
\fill [color=uququq] (7.82,2.1) circle (1.5pt);
\fill [color=uququq] (2.42,0.23) circle (1.5pt);
\fill [color=uququq] (2.42,3.97) circle (1.5pt);
\fill [color=uququq] (5.66,5.84) circle (1.5pt);
\fill [color=uququq] (8.9,3.97) circle (1.5pt);
\fill [color=uququq] (8.9,0.23) circle (1.5pt);
\fill [color=uququq] (5.66,-1.64) circle (1.5pt);
\fill [color=uququq] (5.66,2.1) circle (1.5pt);
\fill [color=uququq] (1.29,-1.63) circle (1.5pt);
\fill [color=uququq] (2.4,-1.64) circle (1.5pt);
\fill [color=uququq] (1.36,2.04) circle (1.5pt);
\fill [color=uququq] (0.81,1.14) circle (1.5pt);
\fill [color=uququq] (2.43,2.07) circle (1.5pt);
\fill [color=uququq] (7.28,6.78) circle (1.5pt);
\end{scriptsize}
\end{tikzpicture}
\caption{Tiling by hexagons to unfold a billiard trajectory.}
\label{fig:tiling}
\end{figure}

In this description, periodic trajectories correspond to straight lines following a direction
\[v_{a,b}=ae_1+be_2,\,\,a,b\in\mathbb{Z}.\]
For the systole, we are only intersted in simple closed geodesics, so we may assume that $\gcd(a,b)=1$. The actual periodic geodesic corresponds to the curve 
\[\fonction{\gamma_{a,b,x}}{\crochet{0,r_{a,b,x}}}{\mathbb{R}^2}{t}{\begin{pmatrix}
x\\0
\end{pmatrix}+tv_{a,b}},\]
where $r_{a,b,x}>1$ is a rational number. Note that the periodic geodesic depends on the starting point $\begin{pmatrix}
x\\0
\end{pmatrix}$.

The fact that we need to allow the curve to go beyond the hexagon $(a,b)$ supports two situations. First, when the trajectory reaches $(a,b)$, the hexagon $(a,b)$ might not be an image by a translation of the original hexagon $H$, hence the edge we reach might not correspond by the action of $\mathbb{Z}e_1\oplus\mathbb{Z}e_2$ on $\mathbb{R}^2$ to the original starting edge. And second, not every periodic billiard trajectory gives a periodic geodesic on $(\mathbb{S}^2,d_H)$, it is only the case if the trajectory meets an even number of hexagons. This number $r_{a,b,x}$ may depend on the starting point $x$.

For instance, we will argue that the systole is achieved by $a=0, b=1$, for which $r_{0,1,x}$ has to be $2$ for any $x$. See figure \ref{fig:tiling}.

In the following discussion, we will use repeatedly the computational arguments below.

\begin{remark}
\begin{enumerate}
\item Any straight line that joins an edge of an hexagon to the opposite edge has length 2. For further use, we call those lines generalized diameters.
\item Any straight line that joins an edge which does not belong to a $l^1$-type triangle to the parallel diagonal inside the hexagon has length 1. We call such a line a generalized radius. Such a triangle contains one of the segment $\overline{L_i}$ and we say that the corresponding radius is associated to the the segment $\overline{L_i}$.
\item In the computation of the systole, we will frequently encounter the following situation that we describe now once and for all. We consider a diamond made with two triangles of type $l^1$ glued along an edge shared by two hexagons $(a,b)$ and $(a',b')$. If a geodesic crosses the hexagon entering and exiting by two opposite sides, there is segment $\overline{L_i}$ in $(a,b)$ and a segment $\overline{L_{i'}}$ in $(a',b')$ with the property that the sum of the lengths of the projection of the part of the geodesic in $(a,b)$ on $\overline{L_i}$ and the length of the projection of the part of the geodesic in $(a',b')$ on $\overline{L_{i'}}$ is 1. See figure \ref{fig:diamond}.
We will refer to this situation by saying that the geodesic crosses the diamond $(a,b),(a',b')$ (if necessary: with contributing lengths on the pair $\overline{L_i}$, $\overline{L_{i'}}$
).\\

\definecolor{wwffww}{rgb}{0.4,1,0.4}
\definecolor{ffqqqq}{rgb}{1,0,0}
\definecolor{ttccff}{rgb}{0.6,0.6,1}
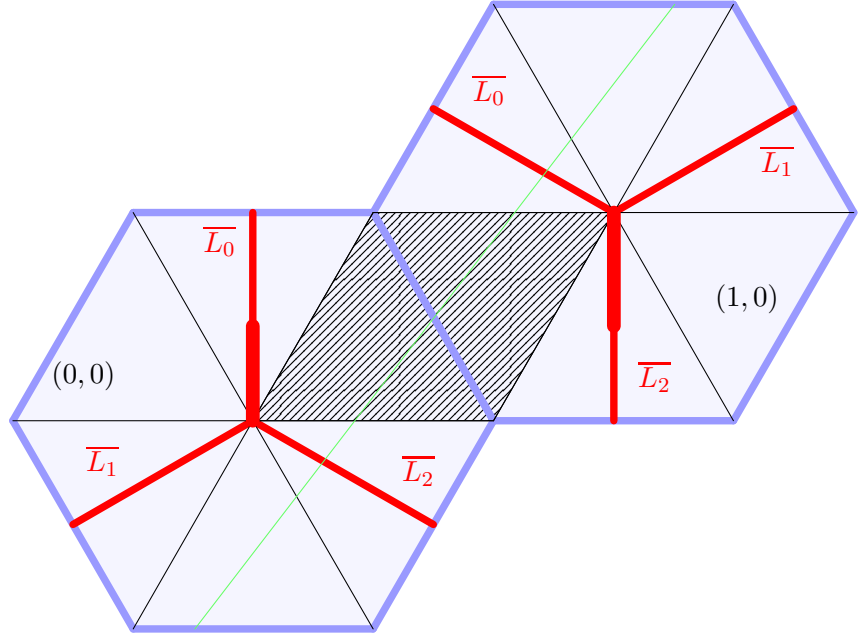
\begin{figure}[!h]
\begin{tikzpicture}[line cap=round,line join=round,>=triangle 45,x=1.0cm,y=1.0cm]
\clip(-1.6,-0.78) rectangle (14.52,9.67);
\fill[line width=2.8pt,color=ttccff,fill=ttccff,fill opacity=0.1] (2.17,0.25) -- (5.36,0.25) -- (6.96,3.02) -- (5.36,5.79) -- (2.17,5.79) -- (0.57,3.02) -- cycle;
\fill[line width=2.8pt,color=ttccff,fill=ttccff,fill opacity=0.1] (5.36,5.79) -- (6.96,3.02) -- (10.15,3.02) -- (11.75,5.79) -- (10.15,8.56) -- (6.96,8.56) -- cycle;
\fill[fill=black,pattern=north east lines] (3.76,3.02) -- (6.96,3.02) -- (8.56,5.79) -- (5.36,5.79) -- cycle;
\draw [line width=2.8pt,color=ttccff] (2.17,0.25)-- (5.36,0.25);
\draw [line width=2.8pt,color=ttccff] (5.36,0.25)-- (6.96,3.02);
\draw [line width=2.8pt,color=ttccff] (6.96,3.02)-- (5.36,5.79);
\draw [line width=2.8pt,color=ttccff] (5.36,5.79)-- (2.17,5.79);
\draw [line width=2.8pt,color=ttccff] (2.17,5.79)-- (0.57,3.02);
\draw [line width=2.8pt,color=ttccff] (0.57,3.02)-- (2.17,0.25);
\draw [line width=2.8pt,color=ttccff] (5.36,5.79)-- (6.96,3.02);
\draw [line width=2.8pt,color=ttccff] (6.96,3.02)-- (10.15,3.02);
\draw [line width=2.8pt,color=ttccff] (10.15,3.02)-- (11.75,5.79);
\draw [line width=2.8pt,color=ttccff] (11.75,5.79)-- (10.15,8.56);
\draw [line width=2.8pt,color=ttccff] (10.15,8.56)-- (6.96,8.56);
\draw [line width=2.8pt,color=ttccff] (6.96,8.56)-- (5.36,5.79);
\draw [line width=2.8pt,color=ffqqqq] (3.76,3.02)-- (1.37,1.64);
\draw [line width=2.8pt,color=ffqqqq] (3.76,3.02)-- (3.76,5.79);
\draw [line width=2.8pt,color=ffqqqq] (3.76,3.02)-- (6.16,1.64);
\draw [line width=2.8pt,color=ffqqqq] (8.56,5.79)-- (6.16,7.17);
\draw [line width=2.8pt,color=ffqqqq] (8.56,5.79)-- (8.56,3.02);
\draw [line width=2.8pt,color=ffqqqq] (8.56,5.79)-- (10.95,7.17);
\draw (3.76,3.02)-- (2.17,5.79);
\draw (3.76,3.02)-- (5.36,5.79);
\draw (3.76,3.02)-- (6.96,3.02);
\draw (3.76,3.02)-- (5.36,0.25);
\draw (3.76,3.02)-- (2.17,0.25);
\draw (3.76,3.02)-- (0.57,3.02);
\draw (8.56,5.79)-- (5.36,5.79);
\draw (8.56,5.79)-- (6.96,3.02);
\draw (8.56,5.79)-- (10.15,3.02);
\draw (8.56,5.79)-- (11.75,5.79);
\draw (8.56,5.79)-- (10.15,8.56);
\draw (8.56,5.79)-- (6.96,8.56);
\draw (3.76,3.02)-- (6.96,3.02);
\draw (6.96,3.02)-- (8.56,5.79);
\draw (8.56,5.79)-- (5.36,5.79);
\draw (5.36,5.79)-- (3.76,3.02);
\draw [color=wwffww] (2.99,0.25)-- (9.37,8.56);
\draw [line width=5.2pt,color=ffqqqq] (3.76,4.28)-- (3.76,3.02);
\draw [line width=5.2pt,color=ffqqqq] (8.56,4.28)-- (8.56,5.79);
\draw (0.95,3.95) node[anchor=north west] {$ (0,0) $};
\draw (9.77,4.97) node[anchor=north west] {$(1,0)$};
\draw [color=ffqqqq](2.97,5.74) node[anchor=north west] {$ \overline{L_0} $};
\draw [color=ffqqqq](1.41,2.82) node[anchor=north west] {$ \overline{L_1} $};
\draw [color=ffqqqq](5.61,2.69) node[anchor=north west] {$ \overline{L_2} $};
\draw [color=ffqqqq](8.75,3.94) node[anchor=north west] {$\overline{L_2} $};
\draw [color=ffqqqq](6.55,7.75) node[anchor=north west] {$ \overline{L_0}$};
\draw [color=ffqqqq](10.38,6.81) node[anchor=north west] {$ \overline{L_1} $};
\end{tikzpicture}
\caption{The geodesic crosses a diamond $(0,0)$ $(1,0)$ with contributing lengths in $\overline{L_0}$, $\overline{L_2}$.}
\label{fig:diamond}
\end{figure}

\end{enumerate}
\end{remark} 


To compute the systole, we have to bound the length from below to every periodic geodesic. Note that, to a trajectory $\gamma_{a,b,x}$ directed by $v_{a,b}$, we associate the sequence of hexagons $(0,0),\cdots,(a,b),\cdots$ visited during the trajectory. This sequence is well defined since every trajectory intersects the edges transversaly (after we excluded trajectories hitting the corners of the hexagon) but may depend on $x$ and not only on the direction $v_{a,b}$.

For further description of the geodesics, we assume without loss of generality that in the original hexagon $H$, the segements $\overline{L_i}$'s are organized such that $\overline{L_0}$ is directed by the $y$-axis and $\overline{L_0}, \overline{L_1}, \overline{L_2}$ are cyclically ordered (such as in figure \ref{fig:tiling}). The geometry of each hexagon along a trajectory is deduced accordingly.

By symmetry with respect to $e_2$, we may assume that $a\geqslant 0$. That forces every periodic trajectory to fall into one of the following types according to the three first hexagons visited. 

\vspace{0.5cm}

\textbf{Case 1 : Trajectory of type $(0,0) (0,1) (0,2)\cdots (a,b)$.}

Such a trajectory covers two generalized diameters. It follows that it has length greater or equal than 4 (equality holds, e.g. for $\gamma_{0,1,x}$ for any $x$).

\vspace{0.5cm}

\textbf{Case 2 : Trajectory of type $(0,0) (0,1) (1,1)\cdots (a,b)$.}

In this case $(a,b)$ cannot be $(1,1)$ because the hexagons visited by $\gamma_{1,1,x}$
are $(0,0) (1,0) (1,1)$ for every $x$. Also the trajectory $(0,0) (0,1)$ is not closed since $r_{0,1,x}=2$ for any $x$.

When such a geodesic reaches the hexagon $(1,1)$, it has covered a generalized diameter in $(0,0)$ and 2 generalized radius in $(1,1)$, associated to $\overline{L_0}$ and $\overline{L_2}$. Hence, its length must be at least 4.

\vspace{0.5cm}

\textbf{Case 3 : Trajectory of type $(0,0) (1,0) (1,1)\cdots (a,b)$.}

In this case $(a,b)$ cannot be $(1,1)$ because $r_{1,1,x}=3$ (in particular the trajectory does not close on the edge between $(1,1)$ and $(2,1)$ because there is an odd number of hexagons visited). Hence we can further split the trajectory into different subcases.

In all of the different subcases, the trajectory covers one diamond of type $(0,0)$ $(1,0)$ with contribution by $\overline{L_0}$ and $\overline{L_2}$, delivering already length 1.

\paragraph*{Subcase A} Trajectory of type $(0,0) (1,0) (1,1) (1,2)\cdots (a,b)$.

The trajectory must cover at least one generalized diameter inside $(1,1)$ for an additional length of 2. Also, any such geodesic will continue either inside $(1,3)$ or $(2,3)$ and must cover a generalized radius associated to $\overline{L_2}$ in $(1,2)$. We get a length bigger than 4.

\paragraph*{Subcase B} Trajectory of type $(0,0) (1,0) (1,1) (2,1)\cdots (a,b)$.

Beside the first diamond, such a trajectory must cross the diamond $(1,0),(1,1)$ with contribution in $\overline{L_1}$ and $\overline{L_0}$ and the diamond $(1,1),(2,1)$ with contribution in $\overline{L_2}$ and $\overline{L_1}$.

So far, the length of the geodesic is at least 3. If it follows by entering the hexagon $(3,1)$, it covers another generalized radius of type $\overline{L_0}$ in $(2,1)$. If it follows by the hexagon $(2,2)$, it crosses another diamond $(2,1),(2,2)$ with contribution in $\overline{L_2}$ and $\overline{L_0}$.

In any case, the length is then at least 4.

\paragraph*{Subcase C} Trajectory of type $(0,0) (1,0) (1,1) (2,0)\cdots (a,b)$.

Let us first assume that the trajectory is not some $\gamma_{2,1,x}$ with $r_{2,1,x}=1$, which means that the trajectory does not stop entering $(2,1)$.

Such a trajectory must continue by $(3,1)$. Hence it crosses 4 diamonds between $(0,0)$ and $(1,0)$, between $(1,0)$ and $(1,1)$, between $(2,0)$ and $(2,1)$ and between $(2,1)$ and $(3,1)$ giving at least length 4.

It remains to compute the length of the closed geodesics $\gamma_{2,1,x}$ for which $r_{2,1,x}=1$. Beside the two diamonds between $(0,0)$ and $(1,0)$ and between $(1,0)$ and $(1,1)$, there is a "split diamond" $(0,0)$ $(2,0)$ with contribution in $\overline{L_1}$ and $\overline{L_2}$.

The rest of the contribution to the length is given by $\overline{L_2}$ in $(0,0)$, $\overline{L_0}$ in $(1,0)$ and $\overline{L_1}$ in $(1,1)$ and $(2,0)$. The sum of those contributions is exactly 1 and $\gamma_{2,1,x}$ is a systole. The union of the projections of the parts of the geodesic inside each heaxagon cover a complete segment.

\vspace{0.5cm}

\textbf{Case 4 : Trajectory of type $(0,0) (1,0) (2,0)\cdots (a,b)$.}

This trajectory covers a generalized diameter in $(1,0)$ and either another generalized diameter in $(2,0)$ if it follows by entering $(3,0)$, either two generalized radii in $(2,0)$, associated to $\overline{L_0}$ and $\overline{L_1}$ if it goes toward $(2,1)$.

\vspace{0.5cm}

\textbf{Case 5 : Trajectory of type $(0,0) (1,-1) (2,-1)\cdots (a,b)$.}

Such a trajectory must contain two generalized radii associated to $\overline{L_0}$ and $\overline{L_2}$ in $(1,-1)$ for a length of 2.

Then, if the trajectory crosses $(2,-1)$ escaping through $(2,0)$, it must cover two additional generalized radii in $(2,-1)$, associated to $\overline{L_0}$ and $\overline{L_2}$. If the trajectory escapes through $(3,-1)$, it covers a generalized diameter inside $(2,-1)$. In any case, we get at least length 2 in $(2,-1)$.

\vspace{0.5cm}

\textbf{Case 6 : Trajectory of type $(0,0) (1,-1) (1,0)\cdots (a,b)$.}


\paragraph*{Subcase A} Trajectory of type $(0,0) (1,-1) (1,0) (2,0)\cdots (a,b)$.

Such a trajectory continues necessarily by $(3,0)$ and must crosses two diamonds between $(1,-1)$ and $(1,0)$ and between $(1,0)$ and $(2,0)$, the generalized radii associated to $\overline{L_2}$ in $(2,0)$ and the one associated to $\overline{L_2}$ in $(3,0)$. We get a length at least 4.

\paragraph*{Subcase B} Trajectory of type $(0,0) (1,-1) (1,0) (2,-1)\cdots (a,b)$.

Such a trajectory must continue by $(2,1)$ and may stop after crossing $(2,1)$. Along the way, it crosses two diamonds $(1,-1)$ $(1,0)$ and $(2,-1)$, $(2,0)$, and one split diamond $(0,0)$ $(2,0)$ with contribution in $\overline{L_1}$ and $\overline{L_2}$.

A similar argument to the diamond delivers another length 1. Indeed the geodesic enters $(1,-1)$ by the edge orthogonal to $\overline{L_2}$ and exits $(1,0)$ by the edge orthogonal to $\overline{L_0}$. Hence, the sum of the length of the projection of the part of the geodesic inside $(1,-1)$ on $\overline{L_2}$ and the part inside $(1,0)$ on $\overline{L_0}$ equals 1.
\end{proof}

Theorem \ref{thm:main} then follows from Proposition \ref{prop:metricproperties} (2) and Proposition \ref{prop:systole}.

\section{Regularity Issues}\label{sec:regularity}

So far the metric on $\mathbb{S}^2$ that we constructed has singularities and is only smooth outside of the glueing edges. We conclude this paper by sketching how we could improve regularity along the edges without changing the geodesic flow. This discussion is based on \cite{pogorelovbookhilbert4} and \cite[Chapter 10]{cossarinisabourau}. The goal of this section is to check that we can extend the regularization procedure in \cite[Paragraph 11.2]{cossarinisabourau} from the hexagon to the sphere. We do not repeat the proofs which are straightforward adaptations.

It is remarkable that the symplectic form described in Section \ref{sec:preliminaries}, in the special case of projective Finsler structure on $\mathbb{R}^2$, characterizes the metric, as well as its regularity. Indeed, consider the set of oriented lines $\Gamma$ in $\mathbb{R}^2$, identified with $\mathbb{R}\times \mathbb{S}^1$ (the factor $\mathbb{S}^1$ is the direction, the factor $\mathbb{R}$ gives the signed distance to the origin). Let $\mu$ be a nonnegative Borel measure on $\Gamma$. We assume that the measure is invariant under reversing the orientation, gives finite mass to every compact set, gives zero mass on the subset of lines passing through a given point and gives positive mass to the set of lines crossing a given non degenerate segment. To such a measure, we associate the distance
\[d_\mu(x,y)=\frac{1}{4}\int_\Gamma \sharp \left(\gamma\cap \left[x,y\right]\right)d\mu(\gamma).\]
This distance is projective: straight lines, suitably reparametrized, are geodesics.

\begin{theorem}{\cite{pogorelovbookhilbert4} and \cite[Chapter 10]{cossarinisabourau}}\label{thm:regularity}
The distance $d_\mu$ is Finsler if and only if $\mu$ is a positive smooth measure, in which case it coincides with the symplectic measure.
\end{theorem}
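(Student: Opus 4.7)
The statement packages two classical results: the characterization of projective planar Finsler metrics as Crofton integrals of smooth positive measures (Pogorelov's solution of Hilbert's fourth problem in dimension two), and the Blaschke--Santaló identification of the Holmes--Thompson area with the symplectic measure on lines. I would split the argument into three stages: reconstruction of the infinitesimal norm, the regularity equivalence, and the measure identification.

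\emph{Stage 1 (infinitesimal norm).} Parameterize $\Gamma\cong\mathbb{R}\times\mathbb{S}^1$ by signed distance and direction $(s,\theta)$. For fixed $x$ and unit vector $v$, the set of lines meeting a small segment $[x,x+tv]$ is, modulo $O(t^2)$, a strip of width $t\,|\sin(\theta-\arg v)|$ above each direction $\theta$, centered at the foot $s=s(x,\theta)$. The hypothesis that the pencil of lines through any fixed point carries no $\mu$-mass rules out concentration on the limit fiber, and dominated convergence then yields
\[
F_\mu(x,v)\;:=\;\lim_{t\to 0^+}\frac{d_\mu(x,x+tv)}{t}\;=\;\frac{1}{4}\int_{\mathbb{S}^1}|\sin(\theta-\arg v)|\,d\mu_x(\theta),
\]
where $\mu_x$ is the disintegration of $\mu$ along the pencil through $x$. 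This Crofton formula presents $F_\mu(x,\cdot)$ as the cosine (spherical Radon) transform of $\mu_x$.

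\emph{Stage 2 (equivalence).} If $\mu=h(s,\theta)\,ds\,d\theta$ with $h$ smooth and strictly positive, then $\mu_x$ has smooth positive density in $\theta$ depending smoothly on $x$; classical inversion of the cosine transform (equivalently, the support-function characterization of smooth convex bodies of strictly positive curvature) shows that $\{F_\mu(x,\cdot)\le 1\}$ has $C^\infty$ boundary with positive Gaussian curvature, so $F_\mu^2$ has positive-definite Hessian. Conversely, if $F_\mu$ is Finsler, strict convexity of the indicatrix forces the inverse cosine transform of $F_\mu(x,\cdot)$ to be smooth and positive, and one must then exclude singular parts of $\mu$: a Dirac contribution to some fiber would produce a flat facet on the indicatrix, contradicting the Hessian assumption.

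\emph{Stage 3 (identification).} Let $\omega_\Gamma$ be the symplectic measure on $\Gamma$ obtained by reduction of $(T^*\mathbb{R}^2,\omega)$ along the (straight-line) geodesic flow. Santaló's formula reads, for every Borel $A\subset\mathbb{R}^2$,
\[
\int_{\bigcup_{x\in A}B^*_x}\Omega\;=\;\frac{\pi}{4}\int_\Gamma \sharp(\gamma\cap A)\,d\omega_\Gamma(\gamma).
\]
Combining this with $\area(A)=\frac{1}{\pi}\int_{\bigcup_{x\in A}B^*_x}\Omega$ and the Crofton form $\area(A)=\frac{1}{4}\int_\Gamma \sharp(\gamma\cap A)\,d\mu$ of the area built from $d_\mu$, and using that the family $\{\sharp(\gamma\cap A)\}_A$ separates positive Borel measures on $\Gamma$, one concludes $\mu=\omega_\Gamma$. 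The principal technical obstacle is Stage 2, namely matching the smoothness and Hessian positivity of $F_\mu$ to the smoothness and strict positivity of $\mu$ through the inversion of the cosine transform, and in particular ruling out singular components of $\mu$ under the Finsler hypothesis.
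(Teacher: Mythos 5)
The paper does not prove this statement: it is quoted from Pogorelov's book and from Chapter 10 of Cossarini--Sabourau, so there is no internal argument to compare yours against. Judged on its own, your three-stage architecture (Crofton representation of the infinitesimal norm, cosine-transform inversion, Blaschke--Santal\'o identification of the symplectic measure) is exactly the route of those references, and Stage 1, Stage 3, and the ``if'' direction of Stage 2 are correct in outline; these are also the only parts the paper actually uses, since it only ever regularizes a singular measure into a smooth positive one and needs the resulting metric to be Finsler with the correct Holmes--Thompson area.

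The genuine gap is in the ``only if'' direction, which you flag as the principal obstacle but then dispatch too quickly. First, your exclusion of singular parts only treats atomic contributions to the fiber measures $\mu_x$: these indeed produce segment summands of the dual unit ball (corners of the indicatrix), and this does cover measures like the hexagon's $\mu_d+\mu_u$, which is concentrated on finitely many directions. But a measure of the form $f(s,\theta)\,d\nu(s)\,d\theta$ with $\nu$ singular continuous charges no pencil, satisfies all the standing hypotheses, and has no atomic fibers; for such $\mu$ the limit defining $F_\mu(x,v)$ is $0$ for Lebesgue-a.e.\ $x$ along each direction and fails to exist (or is infinite) on a $\nu$-full set. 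Ruling this out requires a separate argument (for instance, that a continuous positive $F_\mu$ inducing $d_\mu$ forces absolute continuity of the $s$-marginals), not the facet argument. Second, ``classical inversion of the cosine transform'' applied fiberwise at each $x$ only yields, for each fixed $x$, a smooth positive density of $\mu_x$ in $\theta$; to conclude that $\mu=h(s,\theta)\,ds\,d\theta$ with $h$ jointly smooth and positive you must invert globally, i.e.\ recover $h(s,\theta)$ by differentiating the distance function twice transversally to the line $(s,\theta)$ --- this is the actual content of Pogorelov's inversion formula --- and verify that the Finsler hypothesis supplies the needed transversal regularity. As written, Stage 2 asserts the conclusion of Pogorelov's theorem rather than proving it.
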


The usefullness of this statement comes from the fact that the distance $d_H$ on the hexagon is associated to a measure and we can regularize the metric by regularizing the measure while preserving the geodesic flow (every metric is projective).

By performing a stereographic projection through one of the singularities, we may replace the sphere $\mathbb{S}^2$ by $\mathbb{R}^2$. We identify each of the hexagons with their images under stereographic projection. One of the hexagon is denoted $H_d$ and the other one $H_u$. We consider the family of lines $D_{k,d}^t$ and $D_{k,u}^t$ where $D_{k,d}^t$ is the line inside $H_d$ orthogonal to $\overline{L_k}$ at distance $t$ from the origin (similarly in $H_u$). For a given $t$ and $k$, $D_{k,d}^t$ consists in two oriented lines In the coordinate system $\Gamma=\mathbb{R}\times \mathbb{S}^1$, the set $D_{0,d}^t\cup D_{1,d}^t\cup D_{2,d}^t$ is written as $\left[0,1\right]\times\left\lbrace e^{i\frac{\pi}{6}},e^{i\frac{5\pi}{6}},e^{i\frac{3\pi}{2}}\right\rbrace\cup \left[-1,0\right]\times\left\lbrace e^{i\frac{7\pi}{6}},e^{i\frac{11\pi}{6}},e^{i\frac{\pi}{2}}\right\rbrace$ (the union corresponds to the two possible orientations). We endow $\mathbb{R}\times \mathbb{S}^1$ with the usual Lebesgue measure $dxd\theta$ ($d\theta$ has finite mass $2\pi$). The measure $\mu_d$ is (2 times) the restriction of $dxd\theta$ to $D_{0,d}^t\cup D_{1,d}^t\cup D_{2,d}^t$. We construct similarly $\mu_u$. Finally we set
\[\mu=\mu_d+\mu_u.\]
We now conclude with the very same procedure as in \cite[Paragraph 11.2]{cossarinisabourau}. We convolve the measure $\mu +  \varepsilon dxd\theta$ with a smooth approximation of the Dirac mass. With theorem \ref{thm:regularity} and \cite[Lemma 11.5]{cossarinisabourau}, this measure gives in turn a smooth Finsler metric $\varepsilon$-close to $(\mathbb{S}^2,d_H)$.

A much more delicate task is to remove the singularities.

\bibliographystyle{alpha}

\bibliography{biblio}

\end{document}